\documentclass[12pt, reqno ]{amsart}
\usepackage{amsmath,amssymb,amsbsy,amsfonts,amsthm,latexsym, amsopn,amstext,amsxtra,euscript,amscd,mathrsfs,color,bm, cite,multirow,tabularx}
\usepackage{todonotes,dirtytalk}   
\usepackage{url}
\usepackage[colorlinks,linkcolor=blue,anchorcolor=blue,citecolor=blue]{hyperref}
\usepackage{color}
\usepackage{comment}
\usepackage{dirtytalk}
\usepackage{lipsum}

\begin{document}
\newtheorem{problem}{Problem}
\newtheorem{theorem}{Theorem}
\newtheorem{lemma}[theorem]{Lemma}
\newtheorem{crit}[theorem]{Criterion}
\newtheorem{claim}[theorem]{Claim}
\newtheorem{cor}[theorem]{Corollary}
\newtheorem{prop}[theorem]{Proposition}
\newtheorem{definition}{Definition}
\newtheorem{question}[theorem]{Question}
\newtheorem{rem}[theorem]{Remark}
\newtheorem{Note}[theorem]{Notation}


\def\cA{{\mathcal A}}
\def\cB{{\mathcal B}}
\def\cC{{\mathcal C}}
\def\cD{{\mathcal D}}
\def\cE{{\mathcal E}}
\def\cF{{\mathcal F}}
\def\cG{{\mathcal G}}
\def\cH{{\mathcal H}}
\def\cI{{\mathcal I}}
\def\cJ{{\mathcal J}}
\def\cK{{\mathcal K}}
\def\cL{{\mathcal L}}
\def\cM{{\mathcal M}}
\def\cN{{\mathcal N}}
\def\cO{{\mathcal O}}
\def\cP{{\mathcal P}}
\def\cQ{{\mathcal Q}}
\def\cR{{\mathcal R}}
\def\cS{{\mathcal S}}
\def\cT{{\mathcal T}}
\def\cU{{\mathcal U}}
\def\cV{{\mathcal V}}
\def\cW{{\mathcal W}}
\def\cX{{\mathcal X}}
\def\cY{{\mathcal Y}}
\def\cZ{{\mathcal Z}}

\def\A{{\mathbb A}}
\def\B{{\mathbb B}}
\def\C{{\mathbb C}}
\def\D{{\mathbb D}}
\def\E{{\mathbb E}}
\def\F{{\mathbb F}}
\def\G{{\mathbb G}}
\def\I{{\mathbb I}}
\def\J{{\mathbb J}}
\def\K{{\mathbb K}}
\def\L{{\mathbb L}}
\def\M{{\mathbb M}}
\def\N{{\mathbb N}}
\def\O{{\mathbb O}}
\def\P{{\mathbb P}}
\def\Q{{\mathbb Q}}
\def\R{{\mathbb R}}
\def\S{{\mathbb S}}
\def\T{{\mathbb T}}
\def\U{{\mathbb U}}
\def\V{{\mathbb V}}
\def\W{{\mathbb W}}
\def\X{{\mathbb X}}
\def\Y{{\mathbb Y}}
\def\Z{{\mathbb Z}}

\def\ep{{\mathbf{e}}_p}
\def\eq{{\mathbf{e}}_q}
\def\cal#1{\mathcal{#1}}

\def\scr{\scriptstyle}
\def\\{\cr}
\def\({\left(}
\def\){\right)}
\def\[{\left[}
\def\]{\right]}
\def\<{\langle}
\def\>{\rangle}
\def\fl#1{\left\lfloor#1\right\rfloor}
\def\rf#1{\left\lceil#1\right\rceil}
\def\le{\leqslant}
\def\ge{\geqslant}
\def\eps{\varepsilon}
\def\mand{\qquad\mbox{and}\qquad}

\def\sssum{\mathop{\sum\ \sum\ \sum}}
\def\ssum{\mathop{\sum\, \sum}}
\def\ssumw{\mathop{\sum\qquad \sum}}

\def\vec#1{\mathbf{#1}}
\def\inv#1{\overline{#1}}
\def\num#1{\mathrm{num}(#1)}
\def\dist{\mathrm{dist}}

\def\fA{{\mathfrak A}}
\def\fB{{\mathfrak B}}
\def\fC{{\mathfrak C}}
\def\fU{{\mathfrak U}}
\def\fV{{\mathfrak V}}

\newcommand{\bflambda}{{\boldsymbol{\lambda}}}
\newcommand{\bfxi}{{\boldsymbol{\xi}}}
\newcommand{\bfrho}{{\boldsymbol{\rho}}}
\newcommand{\bfnu}{{\boldsymbol{\nu}}}

\def\GL{\mathrm{GL}}
\def\SL{\mathrm{SL}}

\def\Hba{\overline{\cH}_{a,m}}
\def\Hta{\widetilde{\cH}_{a,m}}
\def\Hb1{\overline{\cH}_{m}}
\def\Ht1{\widetilde{\cH}_{m}}

\def\flp#1{{\left\langle#1\right\rangle}_p}
\def\flm#1{{\left\langle#1\right\rangle}_m}

\def\Zm{\Z/m\Z}

\def\Err{{\mathbf{E}}}
\def\O{\mathcal{O}}

\def\cc#1{\textcolor{red}{#1}}
\newcommand{\commT}[2][]{\todo[#1,color=green!60]{Tim: #2}}
\newcommand{\commB}[2][]{\todo[#1,color=red!60]{Bryce: #2}}

\newcommand{\comm}[1]{\marginpar{%
\vskip-\baselineskip 
\raggedright\footnotesize
\itshape\hrule\smallskip#1\par\smallskip\hrule}}
\newcolumntype{L}{>{\raggedright\arraybackslash}X}

\def\xxx{\vskip5pt\hrule\vskip5pt}

\def\dmod#1{\,\left(\textnormal{mod }{#1}\right)}


\title{A note on least primitive roots}

\author[G.K. Bagger]{Gustav Kj\ae rbye Bagger}
\address{School of Science, The University of New South Wales Canberra, Australia}
\email{g.bagger@unsw.edu.au}
 
\date{\today}
\pagenumbering{arabic}


\begin{abstract}
The least primitive root $g(p)$ modulo a prime $p$ is conjectured by Grosswald to satisfy $g(p)<\sqrt{p}-2$ for any $p>409$. We make progress towards the conjecture by proving $g(p)<\sqrt{p}-2$ when $p > 1.77 \times 10^{54}$. 
\end{abstract}

\maketitle
\let\thefootnote\relax
\footnote{\textit{Affiliation}: School of Science, The University of New South Wales Canberra, Australia.}
\footnote{\textit{Key phrases}: Primitive elements, Grosswald's conjecture, Prime sieves}
\footnote{\textit{2020 Mathematics Subject Classification}: 11A07, 11T99, 20F05}
\section{Introduction}
An integer is a primitive root modulo $n$ if it is a generator for the multiplicative group $\left(\Z/n\Z\right)^\times$. If the modulus is a prime $p$, there are $\phi(p-1)$ such roots modulo $p$, but locating these primitive roots is non-trivial in general. If we denote by $g(p)$ the least primitive root modulo $p$, then Burgess~\cite{Burgess1962} gives an asymptotic bound
$$ g(p) \ll p^{1/4+\epsilon}. $$
Indeed, Grosswald~\cite{Grosswald} conjectured 
\begin{equation} \label{Grosswald}
g(p) < \sqrt{p} - 2 \quad \text{ for all } p>409. 
\end{equation} 
Note that the cutoff $p>409$ arises since $g(409)=21>\sqrt{409}-2$ and we will assume $p>409$ throughout, unless otherwise specified. If one is willing to assume the Generalised Riemann Hypothesis, then~\eqref{Grosswald} is resolved completely (see~\cite{McGownTrevinoTrudgian2017}). Unconditionally, the conjecture holds whenever $p \notin [10^{16},10^{56}]$ with the lower and upper limits due to McGown and Sorensen~\cite{McGownSorensen2025} and McGown and Trudgian~\cite{McGownTrudgian2020}, respectively. We improve upon this upper limit and suggest a method for further refinements.
\begin{theorem} \label{T: Gross int}
$$g(p) < \sqrt{p} - 2 \quad \text{\normalfont for all } p>1.77 \times 10^{54}. $$
\end{theorem}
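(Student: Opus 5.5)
The approach is the standard character-sum criterion for primitive roots, combined with a prime sieve in the style of Cohen, Trudgian and McGown. Write $p-1 = \prod_{i} q_i^{a_i}$ with $\omega(p-1)$ distinct prime factors, and let $N(x)$ count the primitive roots in $[1,x]$. By the Vinogradov indicator
\[
\frac{\phi(p-1)}{p-1}\sum_{d\mid p-1}\frac{\mu(d)}{\phi(d)}\sum_{\chi^{\,\mathrm{ord}}=d}\chi(n),
\]
we have
\[
N(x) = \frac{\phi(p-1)}{p-1}\Big(\lfloor x\rfloor + \sum_{1<d\mid p-1}\frac{\mu(d)}{\phi(d)}\sum_{\mathrm{ord}\,\chi=d}\sum_{n\le x}\chi(n)\Big).
\]
The character sums are bounded by an explicit Burgess inequality, of the form $|\sum_{n\le x}\chi(n)|\le C(r)\,x^{1-1/r}p^{(r+1)/(4r^2)}(\log p)^{1/r}$ with explicit constants, as in McGown--Trudgian. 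Taking $x=\sqrt p-2$, this gives $N(x)>0$ provided roughly
\[
\sqrt{p} \gg 2^{\omega(p-1)}\,C(r)\,p^{(1/2)(1-1/r)+(r+1)/(4r^2)}(\log p)^{1/r}.
\]
Without a sieve, the factor $2^{\omega(p-1)}$ is what forces the threshold up to $10^{56}$.

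To lower the threshold I will use the sieve. Split the prime divisors of $p-1$ into a core $e$, consisting of the small primes, and $s$ sieving primes $p_1,\dots,p_s$. The standard sieving inequality is
\[
N(x)\ge \sum_{i=1}^{s}N_{ep_i}(x)-(s-1)N_e(x),
\]
where $N_k$ counts the $k$-free elements. It yields $N(x)>0$ whenever
\[
x^{1/r}>C\,2^{\omega(e)}\,p^{(r+1)/(4r^2)}(\log p)^{1/r}\,\frac{s-1+2\delta}{\delta}\quad\text{with } \delta=1-\sum_{i}\frac1{p_i}>0.
\]
The case analysis is organised by $\omega=\omega(p-1)$. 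Since $p-1$ is at least the product of the first $\omega$ primes, $p>1.77\times10^{54}$ constrains $\omega$ from below, and for each $\omega$ the worst case is $p-1$ being the primorial. For $\omega$ large, say $\omega\ge 40$ or so, the crude bound $2^{\omega}\le c_\epsilon p^{\epsilon}$ suffices directly. For each remaining $\omega$ in the intermediate range, I will choose $r$ (likely $r=2,3$) together with the number $s$ of sieving primes, taking the largest primes as sieving primes and the rest as the core. The requirement is that, for the smallest admissible $p$ with $\omega(p-1)=\omega$ and $p>1.77\times 10^{54}$, the inequality holds, with $\delta$ estimated pessimistically by taking the sieving primes to be the $(\omega-s+1)$th through $\omega$th primes. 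Monotonicity in $p$ then extends each verification to all larger $p$ with that $\omega$.

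The main obstacle is the range where $\omega(p-1)$ is small, roughly $\omega\approx 13$–$20$, with $p$ near $10^{54}$. There $2^{\omega(e)}$ is not yet negligible, and $\delta$ can be poor if the sieving primes are small. The first remedy is to optimise $s$ and $r$ case by case, possibly using the refined Burgess constants for large $r$. Should a few $\omega$ values still fail just above the threshold, the stated constant $1.77\times10^{54}$ would then simply be where the worst surviving case crosses over, and I would tune the core/sieve split and use a sharper estimate for $\sum 1/p_i$ to make it fit exactly. This final numerical optimisation fixes the constant in Theorem~\ref{T: Gross int}.
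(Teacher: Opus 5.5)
There is a genuine gap, and it lies in the analytic input rather than in the sieve. The explicit Burgess bound you propose carries a factor $(\log p)^{1/r}$. Plug it into your sieve criterion with $x=\sqrt p$ and $F:=\frac{2\delta+s-1}{\delta}2^{\omega-s}$. You get $p^{(r-1)/(4r^2)}>C(r)F(\log p)^{1/r}$, that is, $p>(C(r)F)^{4r^2/(r-1)}(\log p)^{4r/(r-1)}$.

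The exponent $4r^2/(r-1)$ on $F$ is minimised at $r=2$: it equals $16$, then $18$, $21.3,\dots$. So taking $r=3$, or using refined constants for large $r$, only makes matters worse. At $r=2$ you pay an extra $(\log p)^8\approx 10^{16.8}$ at $p\approx 10^{54}$, on top of $C(2)^{16}$. Even the sharper $(\log p)^{1/(2r)}$ form costs $(\log p)^4\approx 10^{8.4}$.

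There is no room for this loss. In the relevant range $F^{16}$ is already about $10^{50}$, and the paper's criterion is $p>4220F^{16}$. Even so, it clears $\omega=34$ only narrowly: $\Delta_{34}\le 6.25\times 10^{54}$ versus $q_{34}\#>10^{55}$. A loss of $10^{8}$ or more pushes the crossover to roughly $\omega\approx 39$ or beyond, i.e. $p$ around $10^{66}$ or higher. That is worse than the $10^{56}$ already known; it is essentially the older log-Burgess-plus-sieve argument, which only reached about $10^{71}$. No choice of $s$, $r$ or core/sieve split recovers this. Attributing such a bound to McGown--Trudgian is also off, since the point of their Theorem 3 is precisely to be log-free.

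What you are missing is that log-free criterion. The paper takes McGown--Trudgian's smoothed Burgess-type theorem (with parameters $h$, $H$, $X=H/h$, $A(X)$, $B(X)$) at $r=2$. It optimises by setting $h=\lambda p^{1/4}$ with $\lambda\approx 0.8165$ and $H=2CF^2p^{3/8}$ with $C=1.4195$. This yields $g(p)<2.839F^2p^{3/8}$, and hence $p>4220F^{16}\Rightarrow g(p)<\sqrt p-2$. The side condition $2H^2<hp$ is secured for $p>10^{47}$ by McGown--Sorenson's $g(p)<p^{0.56}$. The range $p>10^{56}$ is quoted from McGown--Trudgian, and that is what caps $\omega(p-1)\le 34$.

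Your case analysis is also misdirected. The bound $p-1\ge q_\omega\#$ limits $\omega$ from above given an upper bound on $p$; a lower bound on $p$ says nothing about $\omega(p-1)$ from below. The hard cases are also not $\omega\approx 13$--$20$. Since the optimised sieve factor increases with $\omega$, small $\omega$ is easy, and the binding case is the largest $\omega$ the primorial allows. For $\omega=34$ one has $\Delta_{34}<q_{34}\#$. For $\omega\le 33$ one only gets $\Delta_{33}\le 1.77\times 10^{54}$, which exceeds $q_{33}\#\approx 7\times 10^{52}$, and that is exactly where the constant in the theorem comes from. Your final step (``the constant is wherever the worst case crosses over'') therefore does not produce $1.77\times 10^{54}$ with the tools you propose.
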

\section{The sieving criterion}
We begin by giving a slight refinement of \cite[Theorem 2]{McGownTrudgian2020} for $r=2$ by optimising the leading constant and working over the interval $10^{47}<p<10^{56}$. The lower limit is chosen in order to guarantee $g(p)<p^{0.56}$, this is Theorem 5 in~\cite{McGownSorensen2025}.
\begin{prop} \label{P:C=1.4195}
Suppose $10^{47}<p<10^{56}$. Let $e$ be an even divisor of $p-1$, with $p_1\dots p_s \mid p-1$ but $p_i \nmid e$. If $\delta := 1- \sum_{i=1}^sp_i^{-1}>0$, then
$$ g(p) < 2.8390 \left(\frac{2\delta + s - 1}{\delta}2^{\omega-s}\right)^2 p^{3/8}. $$
\end{prop}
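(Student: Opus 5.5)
We follow the standard sieve argument of McGown and Trudgian, using the Burgess bound for character sums with $r=2$. The only change is that the constants are tracked on the range $10^{47}<p<10^{56}$ rather than over all $p$.

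\textbf{Setup.} Let $\chi_e$ denote the indicator of $e$-free residues (those $n$ that are not $\ell$-th powers for any prime $\ell\mid e$). By Vinogradov's formula,
$$\chi_e(n)=\frac{\phi(e)}{e}\sum_{d\mid e}\frac{\mu(d)}{\phi(d)}\sum_{\mathrm{ord}(\chi)=d}\chi(n).$$
Write $N_e(x)$ for the number of $e$-free $n\le x$. The sieve inequality (Cohen's lemma) gives
$$N_{p-1}(x)\ \ge\ \sum_{i=1}^s N_{p_ie}(x)-(s-1)N_e(x).$$
Expanding each term with the formula above yields
$$N_{p-1}(x)\ \ge\ \delta\,\frac{\phi(e)}{e}\Big(x-\Big(\tfrac{s-1}{\delta}+2\Big)2^{\omega(e)}\max_{\chi\ne\chi_0}\Big|\sum_{n\le x}\chi(n)\Big|\Big)$$
up to bookkeeping. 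Here $\omega(e)=\omega-s$, where $\omega=\omega(p-1)$; the $2^{\omega-s}$ comes from the squarefree divisors of $e$. The factor $(2\delta+s-1)/\delta$ arises as follows: the terms with $d\mid e$ contribute with weight $\sum_i 1-(s-1)$, while the terms involving $p_i$ contribute with weight $\sum_i (1-1/p_i)\le s$. Collecting these against the main term $\delta x$ gives the stated ratio, as in \cite[Theorem 2]{McGownTrudgian2020}.

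\textbf{Burgess input.} We use the explicit Burgess bound with $r=2$: for $x\le p^{0.56}$ (the relevant range, since $g(p)<p^{0.56}$ by \cite[Theorem 5]{McGownSorensen2025}),
$$\Big|\sum_{M<n\le M+x}\chi(n)\Big|\le C(p)\,x^{1/2}p^{3/16}(\log p)^{1/4}.$$
The $\log$ factor must be treated carefully. The Trevi\~no-type explicit versions give $|\sum\chi|<C\sqrt{x}\,p^{3/16}(\log p)^{1/4}$ with $C$ depending on a lower bound for $p$. The leading constant is to be optimised for $p\ge 10^{47}$.

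\textbf{Conclusion.} It suffices that $N_{p-1}(x)>0$, which holds once
$$x>\Big(\tfrac{2\delta+s-1}{\delta}2^{\omega-s}\Big)\cdot C\sqrt{x}\,p^{3/16}\cdot(\cdots),$$
that is, once $x>\big(C\tfrac{2\delta+s-1}{\delta}2^{\omega-s}\big)^2p^{3/8}\cdot(\cdots)$. The main obstacle is to absorb the residual factors (the $(\log p)^{1/2}$ from squaring, and any lower-order terms in the explicit Burgess bound) into the single constant $2.8390$ uniformly on $10^{47}<p<10^{56}$. The plan is to take the explicit $r=2$ Burgess constant, bound $\log p\le\log 10^{56}$ where it appears, and use $x<p^{0.56}$ to control the remaining terms. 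Finally, one checks numerically that the resulting constant, maximised over the interval (at an endpoint, by monotonicity), is at most $2.8390$.
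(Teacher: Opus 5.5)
Your proposal has a genuine gap at the Burgess step. The bound you feed into the sieve carries a factor $(\log p)^{1/4}$, and after squaring it becomes $(\log p)^{1/2}$. On the range in question this factor is large: $(\log p)^{1/2}\ge(47\log 10)^{1/2}\approx 10.4$, rising to about $11.4$ at $p=10^{56}$. It also increases with $p$, so checking "at an endpoint" only confirms the worst case.

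At best your argument gives $g(p)<c^2(\log p)^{1/2}K^2p^{3/8}$, where $K=\frac{2\delta+s-1}{\delta}2^{\omega-s}$ and $c$ is the explicit $r=2$ Burgess constant. Matching the statement would therefore require roughly $c\le 0.5$. The explicit $r=2$ constants available in the literature are all larger than $1$. Bounding $\log p\le\log 10^{56}$ and using $x<p^{0.56}$ does nothing to recover this loss, so the constant you would obtain is off by more than an order of magnitude.

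The missing ingredient is a log-free estimate. The paper does not use a pointwise character-sum bound at all. It invokes the smoothed $r=2$ criterion of \cite{McGownTrudgian2020} (their Theorem 3, with $A(X)$, $B(X)$ and the factor $\frac{\pi^2}{6}$). That criterion gives $g(p)<H$ provided $\frac{\pi^2}{6}\frac{B^3}{A^4}K^4hp^{1/2}(3+2p^{1/2}/h^2)<H^2$, together with $X=H/h\ge 2$ and $2H^2<hp$.

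The constant then comes out as follows:
\begin{itemize}
\item With $h=\lambda p^{1/4}$ one has $h(3+2p^{1/2}/h^2)=(3\lambda+2/\lambda)p^{1/4}$. Taking $\lambda\approx\sqrt{2/3}$ minimises this at $2\sqrt6\,p^{1/4}$.
\item With $H=2CK^2p^{3/8}$ the criterion becomes $4C^2>\frac{\pi^2}{6}\cdot 2\sqrt6\,(1+O(10^{-7}))$.
\item This gives $2C\approx\pi(2/3)^{1/4}\approx 2.8388$, rounded to $2.8390$.
\end{itemize}

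The side conditions are handled separately:
\begin{itemize}
\item $K\ge 4$, because $\omega(p-1)\ge 2$ (there are no Fermat primes in the range). This yields $X\ge 2.94\times 10^{7}C$, so $A$ and $B$ are within about $10^{-7}$ of $1$.
\item The bound $g(p)<p^{0.56}$ from \cite{McGownSorensen2025} disposes of the case $2H^2\ge hp$. In that case $H>0.408\,p^{5/8}>p^{0.56}$, so $g(p)<H$ anyway.
\end{itemize}

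Your sieve setup is fine: Cohen's inequality does produce the factor $\frac{2\delta+s-1}{\delta}2^{\omega-s}$, and it is already built into that theorem. What must be replaced is the analytic input, namely the logarithmic Burgess bound.
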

We will prove Proposition~\ref{P:C=1.4195} by invoking the following result.
\begin{theorem} \label{T: M and T}
Let $p$ be an odd prime and $e$ be an even divisor of $p-1$. Consider the divisors $p_1\dots p_s \mid p-1$, where $p_i \nmid e$. Let $\delta := 1- \sum_{i=1}^sp_i^{-1}$ and $h>1$ an integer. Choose $H>0$ and set $X:=H/h$. Suppose $\delta>0$, $X\geq 2$ and $2H^2 < hp$. If
\begin{equation} \label{E: T3} 
\frac{\pi^2}{6}\frac{B(X)^3}{A(X)^4}\left(\frac{2\delta + s-1}{\delta}2^{\omega-s}\right)^4hp^{1/2}\left(3+2\frac{p^{1/2}}{h^2}\right) < H^2, 
\end{equation}
where 
$$ A(X) := 1 - \frac{2 \pi^2}{9X} \quad \text{and} \quad B(X) := 1 + \frac{2 \pi^2}{9X} + \frac{1}{h} + \frac{\pi^2 \log X}{3hX}, $$
then $g(p)<H$.
\end{theorem}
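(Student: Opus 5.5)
The plan is to follow the smoothed Burgess argument of McGown and Trudgian with $r=2$, feeding it into the standard sieve for primitive roots. First, write the indicator of ``$n$ is an $e$-th power non-residue in the strong sense'' (i.e.\ $n$ is not a $q$-th power residue for every prime $q\mid e$) as a character sum
$$ f_e(n)=\frac{\phi(e)}{e}\sum_{d\mid e}\frac{\mu(d)}{\phi(d)}\sum_{\mathrm{ord}\,\chi=d}\chi(n). $$
Then apply the sieve inequality
$$ \sum_{n} w(n)\,\mathbf{1}_{\mathrm{prim}}(n)\ \ge\ \sum_{i=1}^s \sum_n w(n) f_{ep_i}(n) - (s-1)\sum_n w(n) f_e(n), $$
valid for any nonnegative weight $w$. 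Each sum splits into a main term, coming from $\chi_0$ and of size $\frac{\phi(e)}{e}\sum_n w(n)$ up to the factors $(1-1/p_i)$, and an error term. The main terms combine to $\delta\,\frac{\phi(e)}{e}\sum w$. The error is bounded by $\frac{\phi(e)}{e}(2\delta+s-1)\,2^{\omega-s}$ times $\max_{\chi\ne\chi_0}|\sum_n w(n)\chi(n)|$, after counting the squarefree divisors that appear. Hence $g(p)<H$ follows once
$$ \max_{\chi\neq\chi_0}\Big|\sum_{n}w(n)\chi(n)\Big| < \frac{\delta}{2\delta+s-1}2^{s-\omega}\sum_n w(n). $$

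Second, I would take $w$ supported on $[1,H]$ and bound the character sum by a Burgess shift argument with $r=2$. Write $n+ab$ with $1\le a\le X=H/h$ and $1\le b\le h$, so that each product $ab\le H$ stays within the support. Averaging over these shifts gives
$$ \Big|\sum_n w(n)\chi(n)\Big|\le \frac{1}{N}\sum_{a}\sum_{x \bmod p} v(x)\Big|\sum_{b\le h}\chi(x+b)\Big|, $$
where $v(x)$ counts representations $x\equiv n\bar a$ and $N$ is the number of pairs $(a,n)$. Apply Hölder with exponents $(4,4/3)$. The fourth-moment bound
$$ \sum_{x\bmod p}\Big|\sum_{b\le h}\chi(x+b)\Big|^4\le 3h^2p+2h^4\sqrt p = h^2 p^{1/2}\cdot p^{1/2}\Big(3+2\tfrac{p^{1/2}}{h^2}\Big), $$
which follows from Weil's bound, produces the factor $\big(3+2p^{1/2}/h^2\big)$. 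The remaining factor is $\sum v(x)^{4/3}\le(\sum v)^{2/3}(\sum v^2)^{1/3}$. Here $\sum v^2$ counts solutions of $a_1n_2\equiv a_2 n_1$, and the hypothesis $2H^2<hp$ forces these congruences to be equalities. Counting the solutions then gives the factor $\frac{\pi^2}{6}$ via coprime pairs.

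Third, I would evaluate the weighted lattice-point sums explicitly. $A(X)=1-\frac{2\pi^2}{9X}$ arises as a lower bound for the main-term normalisation $\sum_{a\le X}\sum_n w(n)$ relative to its continuous approximation. $B(X)$ arises as an upper bound for the diagonal count, with the $\frac1h$ and $\frac{\pi^2\log X}{3hX}$ terms coming from the boundary effects of summing $\phi$-type functions up to $X$ and from the $b$-range. Assembling the pieces, the error is bounded by a quantity of the shape $\big(B^3 h p^{1/2}(3+2p^{1/2}/h^2)\,\tfrac{\pi^2}{6}\big)^{1/4}H^{1/2}$ over $A$ times the main term. Raising the sufficient condition to the fourth power gives exactly \eqref{E: T3} in the form $(\cdots)<H^2$.

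The main obstacle is the third step. The constants $\frac{2\pi^2}{9X}$ and $\frac{\pi^2\log X}{3hX}$ must come out exactly, which requires a careful explicit treatment of $\sum_{a\le X}\phi(a)$-type sums, or of a smoothing such as a Fejér weight, with fully explicit error terms valid for $X\ge2$. I would first try a triangular weight $w$ and use the explicit estimate $\sum_{a\le X}\phi(a)/a^2=\frac{6}{\pi^2}\log X+O(1)$ with a stated constant. I would then check that the error constants collapse to the ones in $A(X)$ and $B(X)$.
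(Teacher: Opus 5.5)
The paper does not prove this statement. It imports \cite[Theorem 3]{McGownTrudgian2020} with $r=2$. So the question is whether your outline reproduces that theorem. It does not yet, and for reasons beyond the constants you already flag as unchecked.

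\textbf{The averaging step.} With $w$ supported on $[1,H]$ and shifts $ab$ up to $Xh=H$, the points $n+ab$ do not stay in the support; they reach $2H$. Moreover, $\sum_n w(n)\chi(n)$ differs from $\sum_n w(n)\chi(n+ab)$ by $\sum_m (w(m)-w(m-ab))\chi(m)$. The trivial bound on this difference has relative size $\asymp ab/H$, which is of the same order as the main term once averaged over $a\le X$, $b\le h$. You drop this term. Classical Burgess absorbs it by induction on the length or by taking $ab\le\varepsilon H$. Neither fits a statement with $Xh=H$ exactly and no corresponding loss.

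\textbf{The counting.} The count does not produce $\frac{\pi^2}{6}$. Your $v(x)$ counts all pairs $(a,n)$ with $a\le X$. Write $a_i=g\alpha_i$ with $\gcd(\alpha_1,\alpha_2)=1$. Each $g$ contributes $\asymp XH/g$ to $\#\{a_1n_2=a_2n_1\}$, so $\sum v^2\asymp XH\log X$ (about $\frac{12}{\pi^2}XH\log X$ for the indicator weight). Hölder then puts a factor of size $\log X$ where the statement has $\frac{\pi^2}{6}$. Coprime-pair density gives $6/\pi^2$, not its reciprocal.

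\textbf{A repair.} Both problems go away if you sieve over the Burgess multiset itself and restrict to coprime pairs. Set $w(m)=\#\{(a,n,b):\gcd(a,n)=1,\ n+ab=m\}$. Then exactly $\sum_m w(m)\chi(m)=\sum_{(a,n)}\chi(a)\sum_{b\le h}\chi(n\bar a+b)$, with no shift error. The fractions $n/a$ are now distinct, so for $a\le X$, $|n|\le H$, the hypothesis $2H^2<hp$ makes the residues $n\bar a$ distinct. Hence $v$ is $0/1$-valued, $\sum_x v(x)^2=\sum_x v(x)$, and Hölder gives $(\text{error}/\text{main})^4\le \sum_x|T(x)|^4/(h^4\sum_x v(x))$. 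Since $\sum_x v(x)$ is $\frac{6}{\pi^2}$ times the number of pairs up to lower-order terms, this yields a log-free $\frac{\pi^2}{6}$. You would still have to choose the ranges of $a,n$ so that the multiset gives $g(p)<H$, and prove explicit pair counts yielding exactly $A(X)$ and $B(X)$. That quantitative work is what the cited theorem supplies and the paper imports, and your proposal leaves it undone.

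\textbf{The fourth-moment step.} This step is wrong as written: $h^2p^{1/2}\cdot p^{1/2}(3+2p^{1/2}/h^2)=3h^2p+2p^{3/2}$, not $3h^2p+2h^4\sqrt p$. In your normalisation, a bound $Dh^2p+Ch^4\sqrt p$ enters the final condition as $\sum_x|T(x)|^4/h^3=hp^{1/2}(C+Dp^{1/2}/h^2)$. So the stated factor corresponds to Weil constant $C=3$ and diagonal constant $D=2$. Your bound gives $hp^{1/2}(2+3p^{1/2}/h^2)$ instead. This is strictly larger whenever $h^2<\sqrt p$, for example at the choice $h\approx 0.8165\,p^{1/4}$ used in Proposition~\ref{P:C=1.4195}.
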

This is~\cite[Theorem 3]{McGownTrudgian2020}, specialised to the case where $r=2$. 
\begin{proof}[Proof of Proposition~\ref{P:C=1.4195}]
Consider an arbitrary constant $C>0$. It is sufficient to prove Theorem~\ref{T: M and T} is invocable with $H = 2C \left(\frac{2\delta + s - 1}{\delta}2^{\omega-s}\right)^2 p^{3/8}$, where $C = 1.4195$, since that implies $g(p)<H$. Choose $h = \lambda p^{1/4}$, where $0.8165\leq \lambda < 0.8166$. Note that we require $h$ to be an integer, so formally $\lambda$ is a function of $p$ but always bounded in the interval given. We have
\begin{align*}
    2 h \leq H \impliedby & 2 \lambda p^{1/4} \leq 2 C \left(\frac{2\delta + s - 1}{\delta}2^{\omega-s}\right)^2 p^{3/8} \\
    \iff & \frac{\lambda}{p^{1/8}}\left(\frac{2\delta + s - 1}{\delta}2^{\omega-s}\right)^{-2} \leq C
\end{align*} 
Since $\omega:= \omega(p-1)$ and $p$ odd, $\omega \geq 2$ unless $p = 2^m + 1$. It is known that $2^m + 1$ is prime only if $m$ is itself a power of 2. Denote by $F_n:=2^{2^n}+1$, the $n$-th Fermat number. The only known Fermat primes are $F_0=3, F_1=5, F_2=17,F_3=257$ and $F_4=65537$ with $F_n$ composite for $n\in[5,32]$. Since $F_{32}= 2^{2^{32}}+1> 10^{10^9}$, no such primes can appear in our interval $10^{47}<p<10^{56}$, so $\omega \geq 2$. Thus
\begin{align*}
\frac{2\delta + s - 1}{\delta}2^{\omega-s} \geq & \frac{2\delta + s - 1}{\delta}2^{2-s} \\
\geq & \text{ min}\left\{\frac{2\delta_0 - 1}{\delta_0}2^{2},\frac{2\delta_1}{\delta_1}2^{1},\frac{2\delta_2 + 1}{\delta_2}\right\} \\
= & \text{ min}\left\{4,4,2+\frac{1}{\delta_2}\right\},
\end{align*}
where $\delta_i$ is interpreted as $\delta$ for $s=i$. But $\delta_2 = 1 - \frac{1}{2}-\frac{1}{p_2}$ for $p_2$ the only other factor of $p-1$ so $\delta_2\leq 1/2$ and 
\begin{equation}
    \frac{2\delta + s - 1}{\delta}2^{\omega-s} \geq 4. \label{B:LowerSieve}
\end{equation}
Since $\frac{\lambda}{4^2p^{1/8}}\leq 6.9\times 10^{-8}$, we conclude that
$$ 6.9\times 10^{-8} \leq C \implies 2h \leq H. $$
Suppose $2H^2\geq hp$. Then $H \geq (hp/2)^{1/2}$, so
$$ g(p) \geq H \implies g(p) \geq (hp/2)^{1/2} > (\lambda p^{1/4}p/2)^{1/2} \geq 0.4082 p^{5/8}.$$
By \cite[Theorem 5]{McGownSorensen2025}, this cannot happen since $g(p)<p^{0.56}<0.4082 p^{5/8}$, with the latter inequality satisfied whenever $p>10^7$. Therefore, we must have $2H^2 < hp$ or we are done since $g(p)<H$ and we may choose any $C\geq 6.9\times 10^{-8}$. We estimate $X = H/h$ via the lower bound from Equation~\eqref{B:LowerSieve}.
\begin{align*}
X  \geq & \ 2C \left(\frac{2\delta + s - 1}{\delta}2^{\omega-s}\right)^2 p^{3/8}/(\lambda p^{1/4}) \\
\geq & \ \frac{2}{\lambda} C \left(\frac{2\delta + s - 1}{\delta}2^{\omega-s}\right)^2p^{1/8} \geq \frac{32}{\lambda} C p^{1/8}.
\end{align*}
Thus $X \geq C\cdot 2.94\times 10^{7}$ since $p>10^{47}$. We proceed by estimating the quantities $A(X)$ and $B(X)$ in Equation~\eqref{E: T3}.
\begin{align*}
B(X) < & \ 1 + 2.18\times 10^{-12} + \frac{7.46\times 10^{-8} + 2.44 \times 10^{-19}\log C}{C} \\
A(X) > & \ 1 - \frac{7.47\times 10^{-8}}{C}
\end{align*}
We may choose $C\leq 1.5$ since its only restriction thus far is $6.9\times 10^{-8} \leq C$. Then 
\begin{align*}
B(X)^3 \leq & \ 1 +1.50 \times 10^{-7} \\
A(X)^4 \geq & \ 1- 2.00 \times 10^{-7}
\end{align*}
We are now in the position to invoke Theorem~\ref{T: M and T}:
\begin{align*}
    & H^2 - \frac{\pi^2}{6}\frac{B(X)^3}{A(X)^4}\left(\frac{2\delta + s-1}{\delta}2^{\omega-s}\right)^{4}hp^{1/2}\left(3+2\frac{p^{1/2}}{h^2}\right) \\
    \geq \ & 4C^2 \left(\frac{2\delta + s - 1}{\delta}2^{\omega-s}\right)^4 p^{6/8} - (8.0596)\left(\frac{2\delta + s-1}{\delta}2^{\omega-s}\right)^{4}p^{6/8}\ \\
    > \ & 4 \left(\frac{2\delta + s - 1}{\delta}2^{\omega-s}\right)^4 p^{6/8} \left(C^2 - 2.0149 \right)
\end{align*}
As long as $C^2\geq 2.0149$, we have $g(p)< H$, as required.
\end{proof}
By rearranging, we obtain a criterion for checking Grosswald's conjecture.
\begin{cor} \label{C: crit}
Suppose $10^{47}<p<10^{56}$. Let $e$ be an even divisor of $p-1$, with the factors $p_1\dots p_s \mid p-1$ but $p_i \nmid e$. If $\delta := 1- \sum_{i=1}^sp_i^{-1}>0$, then
\begin{equation} \label{E: crit} 4220 \left(\frac{2\delta + s - 1}{\delta}2^{\omega-s}\right)^{16} < p \quad \text{implies} \quad g(p) < \sqrt{p}-2. \end{equation}
\end{cor}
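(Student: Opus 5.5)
Write $F := \frac{2\delta + s - 1}{\delta}2^{\omega-s}$. The plan is to derive the corollary directly from Proposition~\ref{P:C=1.4195}, which under the same hypotheses gives
$$ g(p) < 2.8390\, F^2 p^{3/8}. $$
It is therefore enough to show that the assumption $4220\,F^{16} < p$ forces $2.8390\,F^2 p^{3/8} \le \sqrt{p}-2$. Then $g(p) < \sqrt{p}-2$ follows at once.

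The key step is a rearrangement. Raising the hypothesis to the power $1/8$ gives $F^2 < 4220^{-1/8} p^{1/8}$, and hence
$$ 2.8390\,F^2 p^{3/8} < 2.8390 \cdot 4220^{-1/8}\, p^{1/2}. $$
I will compute $4220^{1/8}$. Since $\log 4220 \approx 8.348$, we get $4220^{1/8} \approx e^{1.0435} \approx 2.839$. Presumably the constant $4220$ was chosen so that this is at least $2.8390$, perhaps with a tiny margin. I will check this numerically, and I expect the bound $2.8390\cdot 4220^{-1/8} \le 1$ to hold, with $4220$ the smallest convenient integer making it true.

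This gives the strict inequality $g(p) < p^{1/2}$, but the statement needs the loss of $2$ absorbed. I will handle this by using the slack between $2.8390\cdot 4220^{-1/8}$ and $1$, together with the size of $p$. Write the ratio as $1-\eta$. Then $(1-\eta)p^{1/2} \le p^{1/2} - 2$ as soon as $\eta p^{1/2} \ge 2$. Because $p > 10^{47}$, we have $p^{1/2} > 3\times 10^{23}$, so any $\eta \ge 10^{-23}$ suffices.

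If the numerical slack turns out to be essentially zero, I will argue instead from integrality. Both $g(p)$ and the bound are compared against $\sqrt p - 2$, and the hypothesis is a strict inequality; alternatively, I can recover a little room in the constant $2.8390$, since Proposition~\ref{P:C=1.4195} was proved with $C^2 \ge 2.0149$ and a marginally larger $C$ leaves the argument intact. The main obstacle is precisely this final numerical margin: verifying that $4220^{1/8}$ exceeds $2.8390$ by enough to swallow the ``$-2$''. Everything else is a direct substitution into Proposition~\ref{P:C=1.4195}.
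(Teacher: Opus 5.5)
Your route, deducing the corollary by rearranging Proposition~\ref{P:C=1.4195}, is the paper's route. However, the numerical step you rely on is false. We have $2.839^2 = 8.059921$, $2.839^4 \approx 64.96233$ and $2.839^8 \approx 4220.104 > 4220$. Hence $4220^{1/8} \approx 2.838991 < 2.8390$, and $2.8390\cdot 4220^{-1/8} \approx 1 + 3.1\times 10^{-6}$, not $\le 1$. With the constant $2.8390$ exactly as stated in the proposition, your rearrangement only yields $g(p) < (1+3.1\times 10^{-6})\sqrt{p}$ when $p$ lies just above $4220F^{16}$. That does not even give $g(p)<\sqrt{p}$. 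Your first fallback cannot rescue this. The shortfall $3.1\times 10^{-6}\sqrt{p}$ is roughly $10^{18}$ for $p>10^{47}$, so neither the integrality of $g(p)$ nor the strictness of $4220F^{16}<p$ can absorb it.

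The repair is your second fallback, but in the opposite direction from what you wrote: you need a marginally \emph{smaller} $C$, not a larger one. A larger $C$ only weakens the bound $g(p)<2CF^2p^{3/8}$. The proof of Proposition~\ref{P:C=1.4195} ends with the condition $C^2 \ge 2.0149$, and $1.4195$ is just $\sqrt{2.0149}\approx 1.41947$ rounded up. Taking $C=\sqrt{2.0149}$ gives $g(p) < 2\sqrt{2.0149}\,F^2p^{3/8}$ with $(2C)^8 = 256\cdot 2.0149^4 \approx 4219.43 < 4220$; this is where $4220$ in the paper's ``rearranging'' comes from. Then $2C\cdot 4220^{-1/8} = 1-\eta$ with $\eta \approx 1.7\times 10^{-5}$, and $\eta\sqrt{p} > 1.7\times 10^{-5}\cdot 3.1\times 10^{23} > 2$. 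At that point your absorption of the $-2$ goes through. The corollary therefore follows from the \emph{proof} of Proposition~\ref{P:C=1.4195}, via the condition $C^2\ge 2.0149$, not from its rounded statement alone, and your write-up needs to make that step explicit.
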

\section{Proof of Theorem~\ref{T: Gross int}}
By~\cite[Corollary 2]{McGownTrudgian2020}, we may assume $p<10^{56}$. Since the 35-th primorial satisfies
$$q_{35}\# := \prod_{k=1}^{35}q_k > 1.49 \times 10^{57}, $$
for $q_k$ the $k$-th prime, we may assume $\omega(p-1) \leq 34$. We bound 
$$ \frac{2\delta + s - 1}{\delta}2^{\omega-s} < \frac{2\delta' + s - 1}{\delta'}2^{\omega-s}, $$
where $\delta' := 1-\sum_{i=\omega-s+1}^{\omega} 1/q_i$ and $\omega := \omega(p-1)$. This follows from the $i$-th smallest prime divisor of $p-1$ being bounded trivially below by the $i$-th smallest prime $q_i$. Thus, we need not know the precise factorisation of $p-1$ in order to apply Corollary~\ref{C: crit}. Since we may choose $0\leq s \leq \omega$ freely, we minimise $\frac{2\delta' + s - 1}{\delta'}2^{\omega-s}$ across $s$. Note that this sieve is increasing with respect to $\omega$. Thus, whenever $\omega(p-1) \leq 34$, we have
$$ \Delta_\omega:= 4220\min_{0\leq s \leq \omega(p-1)}\left\{\left(\frac{2\delta' + s - 1}{\delta'}2^{\omega(p-1)-s}\right)^{16}\right\} \leq 6.25 \times 10^{54}, $$
occurring when $s=31$. Since $q_{34}\#>10^{55}$, all primes with $\omega(p-1)=34$ satisfy Grosswald. We repeat the procedure with $\omega(p-1)\leq 33$. This gives
$$ \Delta_{\omega} \leq 1.77 \times 10^{54}. $$ 
By Corollary~\ref{C: crit}, $p>1.77 \times 10^{54}$ implies $g(p) < \sqrt{p}-2$, as required.
\qed 
\vspace{8pt}

This proof outlines a general approach for reducing the interval for which Grosswald is unknown further. For a fixed $k$, find any primes $p$ satisfying $\omega(p-1) = k$ in the range
$$ q_{k}\# + 1 \leq p <\Delta_{k}. $$
Whenever $12\leq \omega \leq 33$, we have $q_\omega\#+1$ composite, so the lower bound can be improved slightly. For any primes that survive the filter, apply Corollary~\ref{C: crit} with the specific factorisation of $p-1$. This amounts to either $\delta$ increasing or being able to choose a better value for $s$ in the sieve. Any remaining values are undetectable by the sieve, and one verifies directly that $g(p)<\sqrt{p}-2$. Computationally, one constructs all $n$ below $\Delta_{k}$ with $\omega(n)=k$. For each $n$, Corollary~\ref{C: crit} is applied with the specific factorisation of $n$. If $n$ passes the criterion~\eqref{E: crit}, the primality of $n+1$ is checked.  


\begin{thebibliography}{9}
\bibliographystyle{plain}
\bibitem{Burgess1962}
D. A. Burgess,
\textit{On character sums and primitive roots},
Proc. London Math. Soc. (3) \textbf{12} (1962), 179--192.
\bibitem{Grosswald}
E. Grosswald,
\textit{On Burgess’ bound for primitive roots modulo primes and an application
to $\Gamma(p)$}, 
Amer. J. Math., (6) \textbf{103} (1981), 1171--1183.
\bibitem{McGownSorensen2025}
K. J. McGown and J. P. Sorenson, 
\textit{Computation of the Least Primitive Root}, 
Math. Comp. \textbf{94} (2025), 909–-917. 
\bibitem{McGownTrudgian2020}
K. J. McGown and T. Trudgian,
\textit{Explicit upper bounds on the least primitive root},
Proc. Amer. Math. Soc. (3) \textbf{148} (2020), 1049--1061.
\bibitem{McGownTrevinoTrudgian2017}
K.~McGown, E.~Trevi\~{n}o, and T.~Trudgian,
\textit{Resolving Grosswald’s conjecture on GRH},
Funct. Approx. (55) \textbf{2} (2016), 215-225.


\end{thebibliography}
\end{document}